\theoremstyle{plain}
\newtheorem{theorem}{Theorem}[section]
\newtheorem{conjecture}[theorem]{Conjecture}
\numberwithin{equation}{section}
\begin{document}

\title{An alternating sum of the floor function of square roots}

\author{Marc Chamberland}
\address{Department of Mathematics, Grinnell College,
Grinnell, IA 50112, USA}
\email{chamberl@grinnell.edu}
\author{Karl Dilcher}
\address{Department of Mathematics and Statistics,
         Dalhousie University,
         Halifax, Nova Scotia, B3H 4R2, Canada}
\email{dilcher@mathstat.dal.ca}
\keywords{Alternating sum; floor function; square root; asymptotics}
\subjclass[2010]{Primary 11A25; Secondary 11B83}
\thanks{Second author's research supported in part by the Natural Sciences and 
Engineering Research Council of Canada, Grant \# 145628481}

\date{}

\setcounter{equation}{0}

\begin{abstract}
We show that the alternating sum of the floor function of $\sqrt{jn}$, with $j$
ranging from 1 to $n$, has an easy evaluation for all odd integers $n\geq 1$.
This is in contrast to known non-alternating sums of the same type which hold
only for a class of primes. The proof is elementary and was suggested by an
AI model. To put this result in perspective, we also prove an asymptotic 
expression for the analogous sum without the floor function.
\end{abstract}

\maketitle

\section{Introduction}\label{sec:1}

The famous two-volume ``Problems and Theorems in Analysis" by P\'olya and 
Szeg{\H o} contains the following curious identity:
if $p\equiv 1\pmod{4}$ is a prime, then
\begin{equation}\label{1.1}
\sum_{j=1}^{\frac{p-1}{4}}\left\lfloor{\sqrt{jp}}\right\rfloor
= \frac{p^2-1}{12},
\end{equation}
where the {\it floor function} of $x\in{\mathbb R}$ is the integer 
$\lfloor{x}\rfloor$ defined by $\lfloor{x}\rfloor\leq x < \lfloor{x}\rfloor+1$.
Further remarks on this identity can be found in Section~\ref{sec:5} below.

It is a natural question to ask whether there are any similar results for 
{\it alternating} sums of the type \eqref{1.1}. While no such identity seems 
to exist for the summation limit $(p-1)/4$, numerical experiments led us to 
the following identity.

\begin{theorem}\label{thm:7.1}
For any odd integer $n\geq 1$ we have
\begin{equation}\label{7.1}
\sum_{j=1}^n(-1)^{j+1}\left\lfloor{\sqrt{jn}}\right\rfloor=\frac{n+1}{2}.
\end{equation}
\end{theorem}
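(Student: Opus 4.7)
The plan is to swap the order of summation and reduce the identity to a parity count controlled by a natural involution. First I would rewrite $\lfloor\sqrt{jn}\rfloor = \#\{k\geq 1 : k^2 \leq jn\}$ and interchange sums, giving
\begin{equation*}
S := \sum_{j=1}^n (-1)^{j+1}\lfloor\sqrt{jn}\rfloor
  = \sum_{k=1}^n \sum_{j=\lceil k^2/n\rceil}^n (-1)^{j+1},
\end{equation*}
where the outer range is capped at $k=n$ because $k^2 \leq jn \leq n^2$.

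Next, I would use the hypothesis that $n$ is odd to evaluate the inner sum. Since $(-1)^{n+1}=1$, the alternating sum $\sum_{j=m}^n (-1)^{j+1}$ equals $1$ when $m := \lceil k^2/n\rceil$ is odd and equals $0$ when $m$ is even. This collapses the identity to
\begin{equation*}
S \;=\; \#\bigl\{k \in \{1,\ldots,n\} : \lceil k^2/n\rceil \text{ is odd}\bigr\}.
\end{equation*}

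The final step, and the essential observation, is that the map $k \mapsto n-k$ is a fixed-point-free involution on $\{1,\ldots,n-1\}$ (no fixed point because $n$ is odd) that flips the parity of $\lceil k^2/n\rceil$. Indeed,
\begin{equation*}
\frac{(n-k)^2}{n} \;=\; n - 2k + \frac{k^2}{n},
\end{equation*}
so $\lceil (n-k)^2/n\rceil = n - 2k + \lceil k^2/n\rceil$, and since $n$ is odd this differs in parity from $\lceil k^2/n\rceil$. Therefore exactly $(n-1)/2$ of the values $k \in \{1,\ldots,n-1\}$ satisfy the parity condition, and the boundary term $k=n$ contributes one more (since $\lceil n^2/n\rceil = n$ is odd), yielding $S = (n-1)/2 + 1 = (n+1)/2$.

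I do not expect a serious obstacle; the main creative step is spotting the involution $k\mapsto n-k$ and recognizing that its parity-reversing property is precisely where the oddness of $n$ is used. A minor bookkeeping point to verify carefully is that $k=n$ really must be counted separately (it has no partner under the involution), and that for all other $k$ the expression $n-2k$ has the same parity as $n$, guaranteeing the flip.
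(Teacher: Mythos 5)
Your proposal is correct and follows essentially the same route as the paper's proof: the same interchange of summation, the same evaluation of the inner alternating sum, and the same pairing $k\leftrightarrow n-k$ (which the paper phrases as matching $k$ with $n-k$ for $1\leq k\leq (n-1)/2$ rather than as a fixed-point-free involution) with $k=n$ handled separately. No gaps.
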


This result is surprising for at least two reasons. 
First, it holds for {\it all} odd $n\geq 1$, and not just
for primes of a certain form, as is the case in \eqref{1.1}. 
Second, using standard methods from analytic number theory, we can prove
the following estimate, valid for any odd integer $n\geq 1$:
\begin{equation}\label{1.3}
\sum_{j=1}^n(-1)^{j+1}\sqrt{jn}=\frac{n}{2}+C\sqrt{n}+\frac{1}{8}
+ O\left(\frac{1}{n}\right)\qquad (n\rightarrow\infty),
\end{equation}
where $C\simeq 0.3801$ is a constant that can be explicitly expressed as an
infinite series. 

Comparing \eqref{1.3} with \eqref{7.1},
we see that by putting $\sqrt{jn}$ between the floor brackets, the error term
$C\sqrt{n}+1/8+O(1/n)$ turns into the surprisingly neat constant $1/2$.

It is the main purpose of this paper to prove the identity \eqref{7.1}; this is
done in Section~\ref{sec:2}. In Section~\ref{sec:4} we prove the estimate 
\eqref{1.3}, and we conclude this paper with a few remarks in 
Section~\ref{sec:5}.

\section{Proof of Theorem~\ref{thm:7.1}}\label{sec:2}

The following proof was suggested by the AI model {\it Gemini} \cite{Ge}. It
was then simplified and rewritten by us.

\begin{proof}[Proof of Theorem~\ref{thm:7.1}]
We rewrite the positive integer $\lfloor\sqrt{jn}\rfloor$ as a sum and then
change the order of summation, obtaining
\begin{align}
\sum_{j=1}^n(-1)^{j+1}\left\lfloor{\sqrt{jn}}\right\rfloor
&=\sum_{j=1}^n(-1)^{j+1}\sum_{k=1}^{\lfloor{\sqrt{jn}}\rfloor}1\label{2.1a}\\
&=\sum_{k=1}^n\sum_{j=\lceil k^2/n\rceil}^n(-1)^{j+1},\nonumber
\end{align}
where we have used the fact that $k\leq\sqrt{jn}$ is equivalent to 
$j\geq k^2/n$. Next, when $\lceil k^2/n\rceil$ is odd, then the first
summand in the inner sum on the right of \eqref{2.1a} is 1, followed by an equal
number of summands $-1$ and 1. When $\lceil k^2/n\rceil$ is even, the summands
all cancel each other, and therefore the inner sum is 1 when 
$\lceil k^2/n\rceil$ is odd and is 0 otherwise. Hence we have with 
\eqref{2.1a},
\begin{equation}\label{2.2a}
\sum_{j=1}^n(-1)^{j+1}\left\lfloor{\sqrt{jn}}\right\rfloor
= \#\{1\leq k\leq n\mid \lceil k^2/n\rceil\;\hbox{is odd}\}.
\end{equation}
where as usual $\#A$ denotes the cardinality of a finite set $A$.
When $k=n$, then $\lceil k^2/n\rceil=n$ is odd. Next, we note that
\[
\left\lceil\frac{(n-k)^2}{n}\right\rceil
=\left\lceil n-2k+\frac{k^2}{n}\right\rceil
=n-2k+\left\lceil\frac{k^2}{n}\right\rceil.
\]
Since $n$ is odd, $\lceil(n-k)^2/n\rceil$ and $\lceil k^2/n\rceil$
have opposite parities, and for $1\leq k\leq (n-1)/2$, they are exactly all
the remaining elements of the set in \eqref{2.2a}. So finally, by \eqref{2.2a}
we have
\[
\sum_{j=1}^n(-1)^{j+1}\left\lfloor{\sqrt{jn}}\right\rfloor
= 1 +\frac{n-1}{2} = \frac{n+1}{2},
\]
which was to be shown.
\end{proof}

To conclude this section, we briefly explain an alternative approach to a proof
of Theorem~\ref{7.1} which may be of interest in its own right.

The summand for $j=n$ in \eqref{7.1} is clearly 1, and we consider the
differences
\begin{equation}\label{2.3}
d_n(\ell):=\left\lfloor{\sqrt{2\ell n}}\right\rfloor
-\left\lfloor{\sqrt{(2\ell-1)n}}\right\rfloor,\qquad1\leq\ell\leq\frac{n-1}{2}.
\end{equation}
Then the statement of Theorem~\ref{thm:7.1} is equivalent to 
\begin{equation}\label{2.4}
\sum_{\ell=1}^{\frac{n-1}{2}}d_n(\ell)=\frac{n-1}{2}\qquad(n\;\hbox{odd}).
\end{equation}
The approach in question is now best explained by the following example. 
Let $n=33$. Table~1 shows the distribution of the values of $d_n(\ell)$.

\medskip
\begin{center}
\begin{tabular}{|c|cccccccccccccccc|}
\hline
$\ell$ & 1 & 2 & 3 & 4 & 5 & 6 & 7 & 8 & 9 & 10 & 11 & 12 & 13 & 14 & 15 & 16\\
\hline
$d_{33}(\ell)$ & 3 & 2 & 2 & 1 & 1 & 0 & 1 & 0 & 1 & 0 & 0 & 1 & 1 & 1 & 1 & 1  \\
\hline
\end{tabular}

\medskip
{\bf Table~1}: $d_{33}(\ell)$ for $1\leq\ell\leq 16$.
\end{center}
\bigskip

Here the value $d_{33}(1)=3$ corresponds to the two consecutive zeros for
$\ell=10$ and 11, while $d_{33}(2)=2$ corresponds to $d_{33}(8)=0$ and 
$d_{33}(3)=2$ to $d_{33}(6)=0$. This behavior is typical and is made more
explicit in the following statement.

\begin{conjecture}\label{conj:2.1}
Let $n\geq 1$ be an odd integer, and suppose that $\delta:=d_n(\lambda)\geq 2$
for some $\lambda\geq 1$. Then each such $\lambda$ corresponds to $\delta-1$
consecutive integers $\ell_{\lambda},\ldots,\ell_{\lambda}+\delta-2$ such that
$d_n(\ell_{\lambda}+j)=0$ for $j=0,\ldots,\delta-2$. Furthermore, the sets of
these zero values are disjoint.
\end{conjecture}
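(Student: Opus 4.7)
The plan is to reinterpret $d_n(\ell)$ in terms of the sequence $a_k:=\lceil k^2/n\rceil$ for $k=1,\ldots,n$, and then to exploit the symmetry $k\mapsto n-k$ of this sequence. From the argument of Theorem~\ref{thm:7.1} one already has
\[
d_n(\lambda)=\#\{k\in\{1,\ldots,n\}:a_k=2\lambda\}.
\]
Since $a_1\le a_2\le\cdots\le a_n$, and since $(k+1)^2-k^2=2k+1\le 2n-1$ forces $a_{k+1}-a_k\in\{0,1,2\}$ for $1\le k\le n-1$, every bucket $\{k:a_k=2\lambda\}$ is an interval of consecutive integers. In particular, a bucket of size $\delta\ge 2$ is a run $\{k_0,k_0+1,\ldots,k_0+\delta-1\}$ that contributes $\delta-1$ consecutive $0$-increments to $a$.

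The core step is the reflection identity
\[
a_{n-k}=n-2k+a_k\qquad(1\le k\le n-1),
\]
which follows at once from $(n-k)^2=n^2-2nk+k^2$ because $n-2k\in\mathbb Z$ pulls out of the ceiling. Differencing gives
\[
a_{n-k}-a_{n-k-1}=2-(a_{k+1}-a_k),
\]
so the involution $k\mapsto n-k-1$ interchanges the $0$-increments with the $2$-increments (\emph{skips}) of $a$, while fixing the $1$-increments.

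Given $d_n(\lambda)=\delta\ge 2$ with bucket $\{k_0,\ldots,k_0+\delta-1\}$, I would apply this reflection to each of the $\delta-1$ adjacent pairs inside the bucket. The pair $(k_0+i,k_0+i+1)$ produces a $2$-increment at the reflected position $n-k_0-i-1$, and the unique value skipped there is
\[
a_{n-k_0-i-1}+1=n-2(k_0+i)-1+2\lambda,
\]
which is even since $n$ is odd. Writing it as $2\ell_i^\ast$ gives $\ell_i^\ast=(n-1)/2-k_0-i+\lambda$ for $i=0,\ldots,\delta-2$, a run of $\delta-1$ consecutive integers. A short check confirms $1\le\ell_i^\ast\le(n-1)/2$: the lower bound uses $k_0+\delta-2\le(n-3)/2$, which is forced because the pair $(k_0+\delta-2,k_0+\delta-1)$ lies in a single interval and hence $2(k_0+\delta-2)+1<n$; the upper bound uses $k_0>\sqrt{(2\lambda-1)n}\ge\lambda$ (the last inequality holding since $\lambda\le(n-1)/2$). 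Because $a$ strictly skips the value $2\ell_i^\ast$, no $k$ has $a_k=2\ell_i^\ast$, and so $d_n(\ell_i^\ast)=0$.

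Disjointness falls out of the same picture: monotonicity of $a$ implies that each integer in $\{1,\ldots,n\}$ is skipped by at most one $2$-increment, so each such skip determines a unique reflected $0$-increment pair $(k,k+1)$, which sits in a unique bucket $\lambda$. The only real obstacle is cleanly setting up the reflection identity and its differenced form; once those are in hand, everything else is bookkeeping and parity. As a sanity check I would keep in view the identity $\#\{\ell:d_n(\ell)=0\}=\sum_{d_n(\lambda)\ge 2}(d_n(\lambda)-1)$, which is forced by Theorem~\ref{thm:7.1} and confirms that the zeros produced by this procedure account for every zero of $d_n$.
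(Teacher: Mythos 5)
Your argument is correct, and it is worth stressing that the paper itself does \emph{not} prove this statement: it is left as Conjecture~\ref{conj:2.1}, the authors remarking that their own attempt ``relies on a sequence of rather complicated technical lemmas'' and was abandoned. So there is no paper proof to compare against; what you have is a complete proof of the open statement. Your two ingredients are exactly the ones the paper already uses for Theorem~\ref{thm:7.1} --- the bucket identity $d_n(\ell)=\#\{k:\lceil k^2/n\rceil=2\ell\}$ and the reflection $\lceil (n-k)^2/n\rceil=n-2k+\lceil k^2/n\rceil$ --- but you push them further by differencing the reflection to get $a_{n-k}-a_{n-k-1}=2-(a_{k+1}-a_k)$ for $a_k=\lceil k^2/n\rceil$, so that each $0$-increment inside a bucket of size $\delta$ is paired with a $2$-increment, i.e.\ with a skipped even value $2\ell^*$ and hence an $\ell^*$ with $d_n(\ell^*)=0$. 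I checked the explicit formula $\ell_i^*=\tfrac{n-1}{2}-k_0-i+\lambda$ against the paper's $n=33$ table (it reproduces $\lambda=1\mapsto\{10,11\}$, $\lambda=2\mapsto\{8\}$, $\lambda=3\mapsto\{6\}$) and against $n=13$; both range checks are sound ($k_0+\delta-2\le\tfrac{n-3}{2}$ because an internal pair of a bucket forces $2k+1<n$ with $n$ odd, and $k_0>\sqrt{(2\lambda-1)n}\ge\lambda$ because $n\ge 2\lambda+1$ gives $(2\lambda-1)n\ge 4\lambda^2-1\ge\lambda^2$); and disjointness follows as you say since distinct $2$-increments of a nondecreasing sequence skip distinct values, so $(\lambda,i)\mapsto\ell_i^*$ is injective. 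In a final write-up you should spell out explicitly that a $2$-increment at $(j,j+1)$ skips exactly the single value $a_j+1$ and that monotonicity prevents any other index from attaining it; with that said, this is a clean, complete proof, and your closing counting identity shows the construction in fact accounts for \emph{every} zero of $d_n$.
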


This conjecture implies \eqref{2.4} and thus Theorem~\ref{thm:7.1} since
each of the sequences of zeros is filled in by the ``surplus" given by the
corresponding value of $\delta$.

Our as yet incomplete proof of Conjecture~\ref{conj:2.1} relies on a sequence
of rather complicated technical lemmas. In view of the above complete proof
of Theorem~\ref{thm:7.1} suggested by the AI model {\it Gemini}, we did not
pursue this any further.

\section{Proof of \eqref{1.3}}\label{sec:4}

Along with proving the estimate \eqref{1.3}, we are going to show that the
constant $C$ is given by
\begin{equation}\label{4.1}
C=1+\sqrt{2}\sum_{k=1}^{\infty}\frac{(-1)^{k+1}}{(2k-1)8^k}\binom{2k}{k}
\zeta(k-\tfrac{1}{2}),
\end{equation}
where $\zeta(s)$ is the Riemann zeta function.
We begin by rewriting
\begin{align}
\sum_{j=1}^n(-1)^{j+1}\sqrt{jn}
&=\sqrt{n}-\sqrt{n}\sum_{m=1}^{\frac{n-1}{2}}
\left(\sqrt{2m}-\sqrt{2m+1}\right) \label{4.2}\\
&=\sqrt{n}-\sqrt{n}\sum_{m=1}^{\frac{n-1}{2}}
\sqrt{2m}\left(1-\sqrt{1+(2m)^{-1}}\right).\nonumber
\end{align}
We now use the binomial theorem, which in the special case of power 1/2 can be
written as
\begin{equation}\label{4.3}
\sqrt{1-x} = \sum_{k=0}^{\infty}\binom{2k}{k}\frac{x^k}{(1-2k)4^k},\qquad
|x|<1;
\end{equation}
see, e.g., \cite[(1.104)]{Go}. With this identity we get
\begin{equation}\label{4.4}
1-\sqrt{1+(2m)^{-1}} 
=\sum_{k=1}^{\infty}\frac{1}{2k-1}\binom{2k}{k}\left(\frac{-1}{8m}\right)^k,
\end{equation}
and after changing the order of summation we have
\begin{equation}\label{4.5}
\sum_{m=1}^{\frac{n-1}{2}}\sqrt{2m}\left(1-\sqrt{1+(2m)^{-1}}\right)
=\sqrt{2}\sum_{k=1}^{\infty}\frac{(-1)^k}{(2k-1)8^k}\binom{2k}{k}
\sum_{m=1}^{\frac{n-1}{2}}\frac{1}{m^{k-\frac{1}{2}}}.
\end{equation}
We now use the following well-known estimate for the partial sums of $\zeta(s)$:
\begin{equation}\label{4.6}
\sum_{m=1}^N\frac{1}{m^s}=\zeta(s)+\frac{N^{1-s}}{1-s}+\frac{1}{2N^s}
+O\left(\frac{1}{N^{s+1}}\right);
\end{equation}
see, e.g., \cite[pp.~114f]{Ed}. We use \eqref{4.6} with $N=\frac{n-1}{2}$ and
$s=k-\frac{1}{2}$. Then with \eqref{4.1} and \eqref{4.5} the first term on the 
right of \eqref{4.6} immediately gives $1-C$. 

To deal with the second term on the right of \eqref{4.6}, we separate the 
summands for $k=1$ and $k=2$ in \eqref{4.5}, obtaining
\begin{align}
\sqrt{2}\sum_{k=1}^{\infty}&\frac{(-1)^k}{(2k-1)8^k}\binom{2k}{k}
\cdot\frac{\left(\tfrac{n-1}{2}\right)^{\frac{3}{2}-k}}{\tfrac{3}{2}-k}
\label{4.7}\\
&= \sqrt{2}\cdot\frac{-2}{8}\cdot\frac{\sqrt{\tfrac{n-1}{2}}}{1/2}
+\sqrt{2}\cdot\frac{6}{3\cdot 8^2}\cdot\frac{1}{-\tfrac{1}{2}\sqrt{\tfrac{n-1}{2}}}
+O\left(n^{-3/2}\right)\nonumber \\
&= -\frac{1}{2}\sqrt{n-1} - \frac{1}{8}\cdot\frac{1}{\sqrt{n-1}}
+O\left(n^{-3/2}\right)\nonumber \\
&= -\frac{1}{2}\sqrt{n} + \frac{1}{4\sqrt{n}} - \frac{1}{8\sqrt{n}}
+O\left(n^{-3/2}\right),\nonumber
\end{align}
where in the last line we have used the facts that 
\[
\sqrt{n-1}=\sqrt{n}\left(1-\tfrac{1}{2n}\right)+O\left(n^{-3/2}\right),\quad
\frac{1}{\sqrt{n-1}}=\frac{1}{\sqrt{n}}+O\left(n^{-3/2}\right).
\]
Similarly, the third term on the right of \eqref{4.6}, together with 
\eqref{4.5}, gives
\begin{align}
\sqrt{2}\sum_{k=1}^{\infty}&\frac{(-1)^k}{(2k-1)8^k}\binom{2k}{k}
\cdot\frac{1}{2}\cdot\left(\frac{1}{2n}\right)^{\frac{1}{2}-k} \label{4.8}\\
&= \sqrt{2}\cdot\frac{-2}{1\cdot 8}\cdot
\frac{\sqrt{2}}{2\sqrt{n-1}} +O\left(n^{-3/2}\right)
= -\frac{1}{4\sqrt{n}} +O\left(n^{-3/2}\right).\nonumber
\end{align}
Finally, the $O$-term in \eqref{4.6} is easily seen to also be 
$O(n^{-3/2})$. Altogether, \eqref{4.5} with \eqref{4.7} and \eqref{4.8} then
give
\[
\sum_{m=1}^{\frac{n-1}{2}}\sqrt{2m}\left(1-\sqrt{1+(2m)^{-1}}\right)
= 1-C-\frac{\sqrt{n}}{2}-\frac{1}{8\sqrt{n}}+O\left(n^{-3/2}\right).
\]
Substituting this identity into \eqref{4.2}, we obtain the desired estimate
\eqref{1.3}.

\section{Further Remarks}\label{sec:5}

\medskip
{\bf 1.} P\'olya and Szeg{\H o} attribute the identity \eqref{1.1} to 
Bouniakowski \cite{Bou},
whose name is better known in its English transliteration Bunyakovsky.
In \cite{PS2} the identity \eqref{1.1} is the last of several exercises that
use the technique of counting lattice points (Part~8, Problem~20). A more
recent alternative proof was given by Shirali \cite{Sh}.

While the identity \eqref{1.1} does not hold for primes
$p\equiv 3\pmod{4}$, there is in fact an evaluation in this case which involves
the class number of the imaginary quadratic field ${\mathbb Q}(\sqrt{-p})$.
This has been proved in the forthcoming paper \cite{CD}, along with several
other related results, all extending \eqref{1.1}.

\medskip
{\bf 2.} Since the summands in the identity \eqref{7.1} have little or no
arithmetic structure, we cannot expect any classical inversion formula, such as
M\"obius inversion, to apply here. However, the curious identity
\begin{equation}\label{7.21}
\left\lfloor\sqrt{k}\right\rfloor 
= \sum_{d=1}^k\lambda(d)\left\lfloor\frac{k}{d}\right\rfloor,
\end{equation}
which appears in \cite[p.~50]{DD}, could be considered an inversion of the
identity \eqref{7.1}. Here $\lambda(n)$ is the well-known
Liouville function, defined by
\[
\lambda(n) = (-1)^{\Omega(n)},
\]
where $\Omega(n)$ is the number of all prime divisors of $n$, counting
multiplicities. We have not been able to use the identity \eqref{7.21} in the
study of sums such as \eqref{7.1}.

\section{Acknowledgments}

We wish to thank Luc Chamberland for his hearty encouragement to use Google 
Gemini in our explorations. 

\end{document}